\documentclass[psamsfonts]{amsart}
\usepackage[utf8]{inputenc}
\usepackage{textcomp}

\usepackage{amssymb,amsfonts}
\usepackage[all,arc]{xy}
\usepackage{enumerate}
\usepackage{mathrsfs}
\usepackage{hyperref}
\usepackage{xcolor}
\usepackage{graphicx}

\newtheorem{thm}{Theorem}[section]

\newtheorem{prop}[thm]{Proposition}
\newtheorem{lem}[thm]{Lemma}

\theoremstyle{definition}

\theoremstyle{remark}
\newtheorem{rem}[thm]{Remark}

\numberwithin{equation}{section}

\newcommand{\Z}{\mathbb{Z}}
\newcommand{\R}{\mathbb{R}}

\newcommand{\C}{\mathbb{C}}

\title{A Proof of the Area Vanishing Square Peg Conjecture}

\author{Lorenzo Valerio}
\address{Department of Mathematics,
University of Maryland,
College Park, MD 20742, USA}
\email{lorenz13@umd.edu}

\begin{document}

\begin{abstract}

We resolve an open problem posed by Tao related to the Toeplitz square peg conjecture. We prove that four simple closed polygonal curves in the cylinder, each winding once in the positive direction, jointly inscribe a square whenever the alternating sum of their $\int y\, dx$ integrals vanishes. The proof reduces the problem to the non-displaceability of a pair of Lagrangian tori, which we establish using Lagrangian Floer homology.
\end{abstract}

\maketitle

\section{Introduction}

It is a longstanding open problem of Toeplitz \cite{Toeplitz} whether every Jordan curve in the plane contains four points corresponding to the vertices of a square. In recent years, much progress has been made for curves with certain regularity assumptions. In particular, symplectic methods have been useful in solving many variants of the problem, including the smooth rectangular peg problem, solved by Greene and Lobb in \cite{Greene Lobb rectangle}. 

However, in \cite{Tao integration approach}, Tao argues that arguments that assume certain regularity conditions for the curve don't seem to translate well to the case of arbitrarily rough curves. For this reason, Tao has introduced an integration approach to the square peg problem. In his paper, he poses a new series of conjectures relating to the original square peg problem, providing a possible road map to solving it. The following is Conjecture 4.1 in \cite{Tao integration approach}.

\begin{thm}[\textup{Periodic square peg problem}]
    Let $\sigma_1, \sigma_2 \colon \mathbb{R}/\mathbb{Z} \to Cyl$ be simple curves in the cylinder $Cyl=(\R/\Z)\times \R$  homologous to the circle $\R/\Z$. Suppose also that the sets $\sigma_1(\mathbb{R}/\mathbb{Z})$ and $\sigma_2(\mathbb{R}/\mathbb{Z})$ are disjoint. Then $\sigma_1(\mathbb{R}/\mathbb{Z}) \cup \sigma_2(\mathbb{R}/\mathbb{Z})$ inscribes a square.
\end{thm}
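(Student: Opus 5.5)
The plan is to reduce the periodic square peg problem to a statement about intersections of tori in a symplectic manifold, in the spirit of the abstract.

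\textbf{Step 1: reduction to the polygonal case.} I would first treat simple closed polygonal curves and pass to general simple curves by a limiting argument. Approximate $\sigma_1,\sigma_2$ uniformly by disjoint polygonal curves $\sigma_1^{(n)},\sigma_2^{(n)}$ homologous to $\R/\Z$. Each pair inscribes a square, and by compactness modulo vertical translation these squares subconverge. The danger is degeneration to a point. To rule it out, I would use that the curves are at positive distance $\delta>0$. A square with vertices on $\sigma_1\cup\sigma_2$ that has vertices on both curves has side length at least $\delta/\sqrt2$. So it suffices to arrange, or to deduce from the construction, that the inscribed squares use both curves, or at least do not shrink. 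This nondegeneracy requirement should be built into the Floer-theoretic statement.

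\textbf{Step 2: setup as an intersection problem.} Consider the space of ordered quadruples of points of $Cyl$, namely $Cyl^4$ with the product symplectic form $\sum_i dx_i\wedge dy_i$. Apply the linear symplectic change of variables introduced by Tao: a quadruple $(p_1,p_2,p_3,p_4)$ is a square exactly when certain linear combinations vanish, namely $p_1+p_3=p_2+p_4$ together with a rotation condition on $p_1-p_3$ and $p_2-p_4$. Choose the curves $\gamma_1=\sigma_1$, $\gamma_2=\sigma_2$, $\gamma_3=\sigma_1$, $\gamma_4=\sigma_2$, alternating so that consecutive square vertices lie on different curves. The product $L=\gamma_1\times\gamma_2\times\gamma_3\times\gamma_4$ is a Lagrangian torus. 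The square-producing map is an affine symplectic transformation $\Phi$, modulo the diagonal translation symmetry. Squares inscribed with vertices in this order then correspond to intersections of $L$ with $\Phi(L)$, or equivalently with a fixed "square" Lagrangian. The degenerate intersections, where all four points coincide, cannot occur because $\sigma_1\cap\sigma_2=\emptyset$. This is exactly where disjointness enters. The area condition of the general four-curve statement, $\int_{\gamma_1}y\,dx-\int_{\gamma_2}y\,dx+\int_{\gamma_3}y\,dx-\int_{\gamma_4}y\,dx=0$, holds automatically here because the sum is alternating with repeated curves.

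\textbf{Step 3: non-displaceability.} It remains to show $L\cap\Phi(L)\neq\emptyset$, after quotienting by the $\R/\Z\times\R$ translation action to get compact-type tori. Both tori are Hamiltonian isotopic to a model split torus: each simple curve homologous to the core circle is Hamiltonian isotopic to $\R/\Z\times\{0\}$ in $Cyl$ exactly when the enclosed signed areas match. The vanishing alternating area ensures the flux obstruction vanishes, so the pair is Hamiltonian isotopic to a model pair. For the model pair, I would compute $HF(L,\Phi(L))\ne0$, for example because the model tori intersect cleanly in a torus whose homology survives, and then invoke invariance.

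\textbf{Main obstacle.} The main difficulty is making Floer homology well defined in this noncompact setting. This requires controlling bubbling, which should be fine since $Cyl^4$ is exact or aspherical and the tori are exact once the area condition holds. It also requires ensuring compactness, since Hamiltonian isotopies must not let intersections escape to infinity in the $y$-directions. I would try first to reduce to a compact symplectic manifold by quotienting the vertical translation and using tameness at infinity of $Cyl^4$ with a convex end. Only afterward would I address the limiting argument in Step 1.
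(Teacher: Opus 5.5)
Your proposal breaks at Step 2, where you choose $(\gamma_1,\gamma_2,\gamma_3,\gamma_4)=(\sigma_1,\sigma_2,\sigma_1,\sigma_2)$. In the vertex ordering used for joint inscription, $\Gamma_1,\Gamma_3$ are \emph{opposite} vertices, and so are $\Gamma_2,\Gamma_4$. Your alternating sum is therefore
\[\int_{\sigma_1}y\,dx-\int_{\sigma_2}y\,dx+\int_{\sigma_1}y\,dx-\int_{\sigma_2}y\,dx=2\Bigl(\int_{\sigma_1}y\,dx-\int_{\sigma_2}y\,dx\Bigr),\]
which is $\pm 2$ times the area of the annulus between the two disjoint curves, so it is never zero. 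It does not ``hold automatically'', and the flux obstruction in Step 3 does not vanish. In fact the conclusion is false for this vertex pattern. Take $\sigma_1=\{y=0\}$ and $\sigma_2=\{y=h\}$ with $h\neq0$. The conditions $(x,y)\in\sigma_1$, $(x+a,y+b)\in\sigma_2$, $(x+a-b,y+a+b)\in\sigma_1$, $(x-b,y+a)\in\sigma_2$ force $y=0$, $b=h$, $a=h$ and $a+b=0$, which is a contradiction. So the tori you want to prove non-displaceable are displaceable already in the model case, and no Floer computation can rescue Steps 2--3.

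The correct pattern puts two \emph{adjacent} vertices on each curve: $(\Gamma_1,\Gamma_2,\Gamma_3,\Gamma_4)=(\sigma_1,\sigma_1,\sigma_2,\sigma_2)$. Then the alternating sum cancels identically, and the statement follows from Theorem~\ref{thm: area inequality}. The paper itself does not reprove the statement; it cites Hugelmeyer and notes that the area-vanishing theorem implies it. Disjointness then enters exactly as you intended. A degenerate jointly inscribed square has $a=b=0$ and so gives a point of $\sigma_1\cap\sigma_2$. In the limiting step, the side $P_2P_3$ joins a point near $\sigma_1$ to a point near $\sigma_2$, so its length stays bounded below by roughly $\mathrm{dist}(\sigma_1,\sigma_2)$.

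Two secondary corrections also apply. First, the symplectic form must be the alternating one $\omega_1-\omega_2+\omega_3-\omega_4$, not $\sum_i dx_i\wedge dy_i$; this is where the alternating signs come from. For the product form the square locus is not Lagrangian, since the diagonal translations already pair to $4\,dx\wedge dy$. Correspondingly, in the paper's $T^4$ setup, $\tau$ preserves $\omega_\pm=\pi_1^*\omega-\pi_2^*\omega$ but not $\omega\oplus\omega$. Second, the paper avoids the noncompactness you flag. It rescales the curves as $\tilde\sigma(t)=\frac1N\overline\sigma(Nt)$ into $\C/\Z[i]$ and straightens them by Hamiltonian isotopy (Lemma~\ref{lem: Hamiltonian isotopy}). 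It then passes to the $4$-fold cover $p(z_1,z_2)=(z_1+z_2,\overline{z_1-z_2})$, where the tori split. The K\"unneth formula reduces the computation to one factor with $\gamma_{0,A}=\gamma_{1,A}$, contributing $H^*(S^1;\Z_2)$, and one factor with two same-sign transverse intersection points. The model intersection is thus a circle times two points, not a clean torus.
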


This theorem has been proven by Hugelmeyer \cite{Hugelmeyer} using Lagrangian Floer homology. In this paper, we extend his approach to the setting of Conjecture 5.6 in \cite{Tao integration approach}. We thereby present the proof of the following theorem, which is stated as the contrapositive of Conjecture 5.6 in \cite{Tao integration approach}.

\begin{thm}[\textup Area Vanishing]\label{thm: area inequality}
    Let $\sigma_1,\sigma_2,\sigma_3, \sigma_4: \R/\Z\to Cyl$ be simple closed polygonal paths isotopic to $\mathrm{Graph}_{0,1}:\R/\Z\to Cyl$ where $\mathrm{Graph}_{0,1}(t):=(t,0)$. If \begin{equation}\label{Tao area}
        \int_{\sigma_1}y\ dx - \int_{\sigma_2}y\ dx+\int_{\sigma_3}y\ dx-\int_{\sigma_4}y\ dx= 0
    \end{equation} then $$(\sigma_1(\R/\Z), \sigma_2(\R/\Z), \sigma_3(\R/\Z), \sigma_4(\R/\Z))$$ jointly inscribe a square.
\end{thm}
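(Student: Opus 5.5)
We follow Hugelmeyer's strategy: turn the inscribed-square problem into an intersection problem for Lagrangian tori in $Cyl^2$, and use the area condition \eqref{Tao area} to make those tori Hamiltonian isotopic to a pair with nonvanishing Floer homology.

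\textbf{Setup.} Identify $Cyl^2=(\R/\Z)^2\times\R^2$ with $T^*T^2$, using the symplectic form $\omega=dx_1\wedge dy_1+dx_2\wedge dy_2$. Consider the linear map $R:\C^2\to\C^2$ (acting on lifts) that sends a pair of points $(p_1,p_3)$ to the other diagonal $(p_2,p_4)$ of the square with diagonal $p_1p_3$. Concretely, with $c=(p_1+p_3)/2$ and $v=(p_3-p_1)/2$, set $p_2=c+iv$ and $p_4=c-iv$.

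On $\R^2\times\R^2$ this map is linear and conformally symplectic. On the cylinder one must check that it descends to a well-defined map. Rotation by $90^\circ$ mixes the periodic and nonperiodic coordinates, so some periodicity has to be arranged. Following Hugelmeyer, pass to center/half-diagonal coordinates $(c,v)$, where the relevant symmetry is a linear symplectomorphism $\Phi$ of a suitable quotient of $\R^4$.

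Define the tori
$$L_{13}=\sigma_1\times\sigma_3,\qquad L_{24}=\Phi^{-1}(\sigma_2\times\sigma_4).$$
Squares jointly inscribed in the four curves correspond exactly to points of $L_{13}\cap L_{24}$ off the diagonal $\{v=0\}$. Diagonal intersections occur only where the curves meet, and these degenerate cases are handled separately, for example by perturbation or a limiting argument.

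\textbf{Step 1 (Lagrangian and area class).} Each $\sigma_j$ is a closed curve in a surface, so it is Lagrangian. Hence $L_{13}$ and $L_{24}$ are Lagrangian tori, isotopic to the zero section $(\R/\Z)^2\times\{0\}$.

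Since $H^1$ of these tori is generated by the two circle factors, each torus is Hamiltonian isotopic to the zero section precisely when its flux, i.e.\ its Liouville class, vanishes. Up to a common shift, the flux of $\sigma_1\times\sigma_3$ is the pair $(\int_{\sigma_1}y\,dx,\int_{\sigma_3}y\,dx)$. After $\Phi$, the flux of $L_{24}$ becomes linear combinations of $\int_{\sigma_2}y\,dx$ and $\int_{\sigma_4}y\,dx$.

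The key computation is the relative flux $[\lambda|_{L_{13}}]-[\lambda|_{L_{24}}]$. We expect one component to be free, absorbed by the translation invariance $y\mapsto y+t$ applied to all four curves simultaneously. The remaining component should be the alternating sum in \eqref{Tao area}. So under \eqref{Tao area}, after a simultaneous vertical translation, $L_{24}$ is Hamiltonian isotopic to a torus related to $L_{13}$ by an exact deformation.

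\textbf{Step 2 (nondisplaceability).} Reduce to showing that the zero section $Z$ and $\Phi^{-1}(Z)$, suitably positioned, cannot be displaced from each other by Hamiltonian isotopy. Compute $HF(Z,\Phi^{-1}(Z))$. These are linear Lagrangian tori in $T^*T^2$, so:
\begin{itemize}
\item neither torus bounds nonconstant holomorphic discs (both are exact or monotone-free);
\item their intersection is clean, for instance a circle or finitely many transverse points.
\end{itemize}
Compactness holds because of the cylindrical structure at infinity: the curves are bounded in $y$, and a convex end in the fiber direction supplies a maximum principle. Floer homology is then well defined and equals the Morse homology of the clean intersection, which is nonzero. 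Hence the tori intersect, and an intersection point off the diagonal gives the square.

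\textbf{Main obstacle.} The delicate part is Step 1: defining $\Phi$ compatibly with the periodic $x$-direction and verifying that the flux obstruction is exactly the alternating sum. I would first test this on graphs $y=f_j(x)$, where the fluxes are explicit, and then extend by isotopy invariance.

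A second difficulty is excluding the spurious intersections on the diagonal, i.e.\ degenerate squares, when the curves touch. For this I would use a small generic perturbation of the polygonal curves preserving \eqref{Tao area}, together with a compactness argument: limits of nondegenerate inscribed squares either stay nondegenerate or collapse to a point where all four curves meet, and the latter case is treated as a degenerate square, as Tao permits.
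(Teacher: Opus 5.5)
There is a genuine gap, and it sits exactly where you flagged the ``main obstacle'': the square-making map $\Phi$ is never constructed, and on the space you work in it cannot exist. A point $(p_1,p_3)\in Cyl^2$ determines the diagonal vector $\hat p_3-\hat p_1$ only modulo $\Z$. Replacing $\hat p_1$ by $\hat p_1+1$ moves your $p_2,p_4$ by $\tfrac12\mp\tfrac i2$ respectively, which is not a deck translation of $Cyl$. So on $Cyl^2=T^*T^2$ the square relation is a one-to-infinitely-many correspondence, not a symplectomorphism, and the claim that squares correspond exactly to points of $L_{13}\cap L_{24}$ has no content yet.

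The only cylinder periods $(m,n)$ that the rotation carries to cylinder periods are the diagonal ones $m=n$. The best available quotient is therefore $\C^2/\Z\cong\R\times S^1\times\C$, where $L_{13}$ is a noncompact cylinder. That is Naseri Sadr's setting, and it needs Floer theory with control at infinity, not your Step 2 computation for compact linear tori. Any quotient in which $L_{13}$ is compact must use periods with $m\neq n$, whose images under the rotation have nonzero imaginary part. In such a quotient an intersection point only yields a square with vertices on vertically translated copies of the curves. (With your diagonal pairing the map does not even descend to $(\C/\Z[i])^2$, since its coefficients $\tfrac{1\pm i}{2}$ are not Gaussian integers.) The flux computation of Step 1, which you state only as an expectation, cannot be carried out without $\Phi$. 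Polygonal curves are also not smooth Lagrangians, so you need, and only mention in passing, a smoothing that preserves the area identity exactly.

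The missing idea in the paper is to leave the cylinder while keeping track of genuine squares.
\begin{itemize}
\item It replaces the $\sigma_j$ by smooth approximations and restores the area identity exactly by shifting $\sigma_{1,n}$ vertically by the error $\varepsilon_n$.
\item It rescales by $\tilde\sigma(t)=\frac1N\overline\sigma(Nt)$ with $N>16\lambda$. This squeezes all four curves into a thin horizontal band of $\C/\Z[i]$ and multiplies each area by $1/N$ modulo $\Z$, so the alternating sum still vanishes.
\item On $(\C/\Z[i])^2$ with $\omega_\pm=\pi_1^*\omega-\pi_2^*\omega$, the \emph{adjacent}-pairing map $\tau(a,b)=(a+i(b-a),b+i(b-a))$ has Gaussian-integer coefficients, so it is a symplectomorphism of a compact torus.
\item Nondisplaceability of $\tau(\Gamma_1\times\Gamma_2)$ and $\Gamma_4\times\Gamma_3$ is proved on a $4$-fold cover where the form splits, via Künneth. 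There the area condition makes one factor pair coincide, and the other pair meets in two points of the same sign.
\item Because the curves lie in a thin band, each square found in the torus lifts to a genuine small square on lifts of the cylinder curves, and one then lets $n\to\infty$.
\end{itemize}
Your heuristic that one flux component is free while the other is the alternating sum matches this computation. Once you are in the torus, your clean-intersection idea would also work, after checking weak exactness as in the paper's remark. The straightened tori meet cleanly in a single circle precisely when the alternating sum vanishes mod $\Z$, and are disjoint otherwise.

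Two smaller points. Degenerate squares are allowed, so diagonal intersections need no separate treatment. Also, Tao's ordering is counterclockwise, giving $p_4=c+iv$ and $p_2=c-iv$, the reverse of your formula. This is harmless, since the area identity is symmetric in $\sigma_2$ and $\sigma_4$.
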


For Tao, four sets $\Gamma_1, \Gamma_2, \Gamma_3, \Gamma_4$ in $Cyl$, possibly overlapping, are said to \textit{jointly inscribe a square} if there exist $x\in \R/\Z$ and $y,a,b\in \R$ such that \[ \begin{aligned} (x,y) &\in \Gamma_1,\\ (x+a,y+b) &\in \Gamma_2,\\ (x+a-b,y+a+b) &\in \Gamma_3,\\ (x-b,y+a) &\in \Gamma_4. \end{aligned} \] Note that degenerate squares are permitted in this definition, and that the order of the $\Gamma_i$ matters.

Naseri Sadr \cite{Ali} also recently proved Tao's Conjecture 5.6 as a special case of a more general inscription theorem for isosceles trapezoids. His proof uses Lagrangian Floer homology for noncompact Lagrangian cylinders in $\R\times S^1\times\C$. The proof presented here instead works with Lagrangian tori in a finite symplectic cover of $T^4=(\C/\Z[i])^2$, as we will see.

The statement of Theorem \ref{thm: area inequality} is the strongest of the conjectures posed in \cite{Tao integration approach} as it implies all the other ones in that paper except for the original square peg conjecture. We will prove that it is a consequence of the following result, which serves a similar purpose to Theorem 2 in \cite{Hugelmeyer}. The distinction is that of working with four curves instead of two, as well as having the alternating area condition, which will play a key role in the Floer computation later on.

\begin{thm}\label{thm: area inequality torus}
    Let $\sigma_i:S^1\to \C/\Z[i]$ be smooth embeddings isotopic to the circle $\R/\Z$ for $i=1,2,3,4$, and suppose that
    \begin{equation}\label{eqn:area vanishing torus}
        A(\sigma_1)-A(\sigma_2)+A(\sigma_3)-A(\sigma_4)=0.
    \end{equation}
    Then there exist $a_1,a_2,a_3,a_4\in S^1$ such that $\sigma_4(a_4)=\sigma_1(a_1)+i(\sigma_2(a_2)-\sigma_1(a_1))$ and $\sigma_3(a_3)=\sigma_2(a_2)+i(\sigma_2(a_2)-\sigma_1(a_1))$.
\end{thm}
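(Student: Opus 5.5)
We will follow Hugelmeyer's strategy: rephrase the conclusion as an intersection of two Lagrangian tori in a symplectic four-torus, then prove the tori non-displaceable with Floer homology. Write $T=\C/\Z[i]$ with area form $\omega$, and consider $T^2=T\times T$ with the product form $\omega\oplus\omega$. Set
\[L_1=\sigma_1(S^1)\times\sigma_2(S^1)\subset T^2,\qquad L_2=\sigma_4(S^1)\times\sigma_3(S^1)\subset T^2.\]
Both are Lagrangian tori, since each factor is a curve in a surface. Define the linear map
\[\Phi(z,w)=\bigl(z+i(w-z),\,w+i(w-z)\bigr).\]
Its matrix $\begin{pmatrix}1-i & i\\ -i & 1+i\end{pmatrix}$ has integer Gaussian entries, so it descends to $T^2$. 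Its complex determinant is $(1-i)(1+i)+i\cdot i=1$, so it is a complex-linear automorphism of $T^2$. However, it does not preserve $\omega\oplus\omega$: the matrix is not unitary. This is presumably why the introduction speaks of a finite symplectic cover.

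The first step is to fix this. We compose $\Phi$ with a change of coordinates, for instance the rotation of $\C^2$ to the coordinates $u=z$, $v=w-z$, possibly rescaled. In those coordinates the square map becomes $(u,v)\mapsto(u+iv,\,iv)$-type maps. We will pass to a finite cover or quotient of the lattice on which a suitable symplectic form makes both $L_1$ and $\Phi^{-1}(L_2)$ Lagrangian. Concretely, we search for a symplectomorphism $\Psi$ of a cover $\widetilde{T^4}$ under which the relevant lifts of $L_1$ and $\Phi^{-1}(L_2)$ are Lagrangian. The conclusion of the theorem is then exactly $\Phi(L_1)\cap L_2\neq\emptyset$, or equivalently $L_1\cap\Phi^{-1}(L_2)\ne\emptyset$ in the cover.

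The second step deforms to a model case. We isotope each $\sigma_j$ through embeddings to a standard circle $\R/\Z\times\{c_j\}$, keeping the alternating area sum $(\ref{eqn:area vanishing torus})$ equal to zero throughout. This is possible because the area of each curve can be adjusted independently by vertical translation. Under such an isotopy the two Lagrangians move by Lagrangian isotopies. The Lagrangian isotopies are Hamiltonian precisely when their flux vanishes. The flux pairs with the two generators of $H_1$ of the torus. For a product of curves, one generator gives $A(\sigma_1)-A(\sigma_4)$-type terms and the other gives $A(\sigma_2)-A(\sigma_3)$-type terms, after transporting through $\Phi$. The key check is that, in the cover, the flux of $L_1$ relative to $\Phi^{-1}(L_2)$ only sees the alternating combination $A(\sigma_1)-A(\sigma_2)+A(\sigma_3)-A(\sigma_4)$. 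If it does, the vanishing hypothesis means we may replace the pair by the model pair up to a Hamiltonian isotopy of one of them. I expect this flux bookkeeping, together with choosing the right cover so that the two fluxes collapse to the single alternating sum, to be the main obstacle.

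The third step is the Floer computation in the model case. Both Lagrangians are linear tori, hence tori in $T^4$ that are aspherical and monotone-trivial: $\pi_2(T^4,L)=0$, so no bubbling occurs. Floer homology $HF(L_1,\Phi^{-1}L_2)$ is then well defined over $\Z/2$ and invariant under Hamiltonian isotopy. For two linear Lagrangian tori that intersect transversally in finitely many points, all in the same degree, the differential vanishes for index reasons. Equivalently, the intersection points are all of the same sign, and the count equals the algebraic intersection number computed from $\det$ of the lattice data, which is nonzero. So $HF\neq0$. If the model tori intersect cleanly rather than transversally, we perturb by a small linear translation of the right flux class, which is zero by the area condition. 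Hamiltonian invariance then gives $L_1\cap\Phi^{-1}(L_2)\ne\emptyset$ for the original curves, which produces the points $a_1,\dots,a_4$.
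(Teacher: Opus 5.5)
The decisive gap is in your Step 3. The model tori never meet transversally, and their algebraic intersection number is $0$, not nonzero.

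With straight circles $\Gamma_j$ at heights $c_j$, the torus $\Gamma_4\times\Gamma_3$ is spanned by $\partial_{x_1},\partial_{x_2}$. The torus $\Phi(\Gamma_1\times\Gamma_2)$ is spanned by $\Phi(\partial_{x_1})=(1,-1,0,-1)$ and $\Phi(\partial_{x_2})=(0,1,1,1)$ in coordinates $(x_1,y_1,x_2,y_2)$. All four vectors lie in the hyperplane $y_1=y_2$, so the lattice determinant vanishes. A direct computation shows the two tori meet, cleanly along a circle, exactly when $c_1-c_2+c_3-c_4\in\Z$, and are disjoint otherwise. An argument based on a nonzero intersection count would therefore prove the theorem without the area hypothesis, which is impossible.

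Your fallback does not repair this. Translations do not change tangent planes, so they cannot create transversality. A Morse perturbation along the clean circle produces generators of both parities, so the ``all in one degree'' argument for $\partial=0$ is unavailable.

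What is missing is a way to see $HF\neq0$ for a clean, homologically trivial intersection. The paper passes to the $4$-fold cover $p(z_1,z_2)=(z_1+z_2,\overline{z_1-z_2})$. There $p^*\omega_\pm=2\omega\oplus2\omega$, and suitable components of the preimages are product tori $\gamma_{0,A}\times\gamma_{0,B}$ and $\gamma_{1,A}\times\gamma_{1,B}$. The area condition is precisely what forces $\gamma_{0,A}=\gamma_{1,A}$. The K\"unneth formula then gives $HF\cong H^*(S^1;\Z_2)\otimes HF(\gamma_{1,B},\gamma_{0,B})$. Your same-sign argument is needed only in the surface factor $B$, where the curves meet transversally in two points of equal sign. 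Non-displaceability downstairs follows because Hamiltonian isotopies lift to finite covers.

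Steps 1 and 2 also need correcting, though the fixes are easy.

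In Step 1, the cover is not what makes the square map symplectic. Replace $\omega\oplus\omega$ by $\omega_\pm=\pi_1^*\omega-\pi_2^*\omega$. Products of curves remain Lagrangian, and $\Phi$ preserves $\omega_\pm$ since its matrix satisfies $M^*\,\mathrm{diag}(1,-1)\,M=\mathrm{diag}(1,-1)$. Among forms $a\,\omega\oplus b\,\omega$, this is the only choice up to scale.

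In Step 2, your flux bookkeeping is misdirected. The flux of an isotopy of $\sigma_1(S^1)\times\sigma_2(S^1)$ consists of the areas swept by $\sigma_1$ and by $\sigma_2$; it involves neither $\sigma_3$ nor $\sigma_4$. The right move is to keep each area fixed. Each $\sigma_j$ is Hamiltonian isotopic in $\C/\Z[i]$ to the horizontal circle at height $A(\sigma_j)$, and products of such isotopies are Hamiltonian for $\omega_\pm$. No heights need to move, and the alternating condition is used only in the Floer computation.
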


\begin{figure} \centering \includegraphics[width=0.5\textwidth]{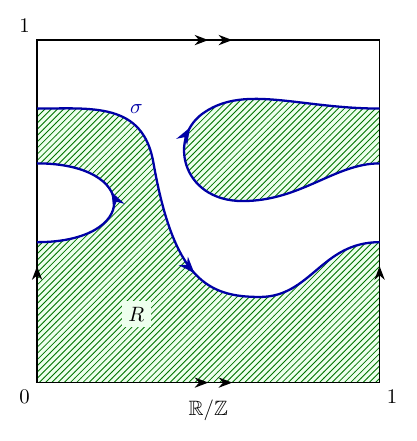}\caption{The shaded region is $A(\sigma)=\operatorname{Area}(R)\in\mathbb R/\mathbb Z$} \label{figure1}\end{figure}

Here, $A(\sigma)$ is the integral $\int_{\sigma}y\ dx$ adapted to the torus $\C/\Z[i]$. More specifically, if $\sigma:S^1\to \C/\Z[i]$ is a smooth embedding isotopic to the circle $\R/\Z$, let $\hat{\sigma}:\R\to \C$ be a lift of the map $t\mapsto \sigma(t+\Z)$ such that $\hat{\sigma}(t)=x(t)+i \ y(t)$ and $\hat{\sigma}(t+1)=\hat{\sigma}(t)+1$ and define \begin{equation}
    A(\sigma)=\int_0^1y(t)\ dx(t)\ \mod \Z
\end{equation} 
This integral can be thought of as the area ``enclosed'' by the curve $\hat\sigma$ and the $x$-axis in a unit time interval. Figure \ref{figure1} represents $A(\sigma)$ in the torus. Note that it is well-defined: a different lift $\hat\sigma'$ differs by $m+i\,n\in \Z[i]$, so the integral will differ by $n\in \Z$, and hence is well-defined in $\R/\Z$.

\subsection{On AI use} There was minimal use of AI tools in the production of this paper. I have used Gemini 3.1 Pro mainly as a learning tool and as a reference provider. In particular, it pointed me towards Denis Auroux's notes to learn about Lagrangian Floer homology \cite{Denis Aroux}. I have also used Gemini 3.1 Pro and GPT-6 Astra to identify errors and clarify parts of the arguments in the final drafts of the paper. The TikZ code for the figures has been generated by GPT-6 Astra.

\section{Setup and Proofs of Main Theorems}

The setup for the proof of Theorem \ref{thm: area inequality torus} is the same as the one in \cite{Hugelmeyer}, but we set everything up here for completeness. Let $\omega=dx\wedge dy = \frac{i}{2}dz\wedge d\bar{z}$ be the symplectic area form on $\C/\Z[i]$ induced by the standard symplectic form on $\C$. Denote the four-torus $(\C/\Z[i])^2$ by $T^4$, and let $\pi_1, \pi_2:T^4\to \C/\Z[i]$ be the projection maps onto the first and second factor, respectively. Define the symplectic form $\omega_\pm=\pi_1^*\omega - \pi_2^*\omega$ on $T^4$. The ``make a square map'' (Figure \ref{figure2}) $\tau:(T^4, \omega_{\pm})\to (T^4, \omega_{\pm})$ is defined in \cite{Hugelmeyer} as $\tau(a,b)=(a+i(b-a), b+i(b-a))$. It is clearly smooth and has smooth inverse $\tau^{-1}(a,b)=(a-i(b-a), b-i(b-a))$, and a quick computation reveals $\tau^*\omega_\pm=\omega_\pm$, so $\tau$ is a symplectomorphism.

\begin{figure} \centering \includegraphics[width=0.5\textwidth]{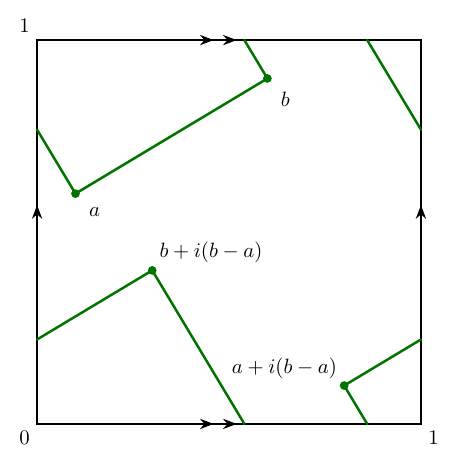}\caption{The \textit{make-a-square map} $\tau$} \label{figure2}\end{figure}

 Consider the Lagrangian submanifolds $L_0=(\sigma_1\times \sigma_2)(S^1\times S^1)\subset T^4$ and $L_1=(\sigma_4\times \sigma_3)(S^1\times S^1)\subset T^4$, where the $\sigma_i$ are as in the statement of Theorem \ref{thm: area inequality torus}. Proving Theorem \ref{thm: area inequality torus} is equivalent to proving that $\tau(L_0)\cap L_1\neq \varnothing$, because then there exist $a_1,a_2,a_3,a_4\in S^1$ such that $\sigma_4(a_4)=\sigma_1(a_1)+i(\sigma_2(a_2)-\sigma_1(a_1))$ and $\sigma_3(a_3)=\sigma_2(a_2)+i(\sigma_2(a_2)-\sigma_1(a_1))$, that is, the $\sigma_i$ jointly inscribe a square in $T^2$. 
 
 In fact, we can simplify the problem further by considering the straight curves  \[\Gamma_j =\{\ x+iA(\sigma_j)\ |\ x\in \R/\Z\ \}\subset \C/\Z[i].\] These curves are Hamiltonian isotopic to the $\sigma_j$ for each $j=1,2,3,4$ respectively (see Lemma \ref{lem: Hamiltonian isotopy}.) So now we can define the ``straightened-out'' Lagrangian submanifolds $K_0=\Gamma_1\times \Gamma_2\subset T^4$ and $K_1=\Gamma_4\times \Gamma_3\subset T^4$ which are Hamiltonian isotopic to $L_0$ and $L_1$ respectively.  Moreover, since $\tau$ is a symplectomorphism, $\tau(L_0)$ is Hamiltonian isotopic to $\tau(K_0)$. Recall that two submanifolds of a symplectic manifold $N_1, N_2\subset M$ are \textit{Hamiltonian non-displaceable}, or simply \textit{non-displaceable}, if every Hamiltonian diffeomorphism $\phi:M\to M$ satisfies $N_1\cap \phi(N_2)\neq \varnothing$. Therefore, to show that $\tau(L_0)\cap L_1\neq \varnothing$, and hence Theorem \ref{thm: area inequality torus}, it suffices to show the following proposition.

 \begin{prop}
     Let $K_0,K_1\subset T^4$ be the Lagrangian submanifolds defined as above under the hypotheses of Theorem \ref{thm: area inequality torus}. Then $\tau(K_0)$ and $K_1$ are Hamiltonian non-displaceable.
 \end{prop}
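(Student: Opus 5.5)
The plan is to compute $\tau(K_0)$ explicitly as an affine linear subtorus of $T^4$ and check that the area condition \eqref{eqn:area vanishing torus} is exactly what makes it intersect $K_1$ in a clean, homologically essential way. Then I would compute the Lagrangian Floer homology $HF(\tau(K_0),K_1)$ and show it is nonzero. Since Floer homology is invariant under Hamiltonian isotopy, a nonzero group forces $\phi(\tau(K_0))\cap K_1\neq\varnothing$ for every Hamiltonian diffeomorphism $\phi$.

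\textbf{Step 1 (explicit geometry).} Write $c_j=A(\sigma_j)$, so that $K_0=\{(x_1+ic_1,\,x_2+ic_2)\}$ and $K_1=\{(u+ic_4,\,v+ic_3)\}$. A direct computation with $\tau(a,b)=(a+i(b-a),\,b+i(b-a))$ gives
\[
\tau(K_0)=\{(x_1-(c_2-c_1)+i(c_1+x_2-x_1),\; x_2-(c_2-c_1)+i(c_2+x_2-x_1))\}.
\]
A point of this set lies in $K_1$ iff $x_2-x_1\equiv c_4-c_1$ and $x_2-x_1\equiv c_3-c_2 \pmod 1$. These two congruences are compatible exactly when $c_1-c_2+c_3-c_4\equiv 0$, which is \eqref{eqn:area vanishing torus}. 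So under the hypothesis, $\tau(K_0)\cap K_1$ is nonempty: it is a finite disjoint union of circles, parametrized by $x_1$ with $x_2=x_1+c_4-c_1$.

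At the level of tangent spaces, $T\tau(K_0)$ is spanned by $(1-i,-i)$ and $(i,1+i)$, and $TK_1$ is spanned by $(1,0)$ and $(0,1)$. A real combination $\alpha(1-i,-i)+\beta(i,1+i)$ lies in $\R^2$ iff $\alpha=\beta$, so the two tangent spaces meet in the line spanned by $(1,1)$. This line is exactly the tangent line of the intersection circles, so the intersection is clean.

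\textbf{Step 2 (no bubbling, well-defined Floer theory).} $T^4$ is aspherical with contractible universal cover $\C^2$. Both tori are linear, hence $\pi_1$-injective, and their lifts are affine Lagrangian planes. Therefore $\pi_2(T^4,\tau(K_0))=\pi_2(T^4,K_1)=0$, and the pair is weakly exact. In this setting $HF(\tau(K_0),K_1)$ is defined over the Novikov field and is invariant under Hamiltonian isotopy of either factor.

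\textbf{Step 3 (computation, the main obstacle).} The hard part is to show that the differential does not kill everything. My first attempt would be to split the problem. I would find a linear symplectic change of coordinates in $(\C^2,\omega_\pm)$, possibly only after passing to a finite symplectic cover of $T^4$ because the integer lattice need not split compatibly. In those coordinates both tori should become products $\ell\times\ell'$ and $m\times m'$ of linear circles in two symplectic $2$-tori. On the factor tangent to the intersection direction $(1,1)$ the circles coincide, so that factor's Floer homology is $H^*(S^1)$. On the complementary factor the circles meet transversally in a point, giving rank one. The Künneth formula then gives $HF\neq 0$ on the cover. Since the lifts of $\tau(K_0)$ and $K_1$ are again tori and Hamiltonian isotopies lift to the cover, non-displaceability there descends to $T^4$.

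If the splitting fails, my fallback is Poźniak's clean-intersection theorem, or the Morse--Bott spectral sequence. Perturb $K_1$ by a Hamiltonian built from a Morse function on each intersection circle, which yields two transverse intersection points per circle. The actions of these points differ only by the small Morse values. In the universal cover, any strip has energy equal to such a small action difference, since the lifted Lagrangians are planes and no other strips exist there. Hence only small local strips contribute, these cancel in pairs, and one gets $HF\cong H^*(\tau(K_0)\cap K_1)\neq 0$.
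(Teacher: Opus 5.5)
Your Steps 1 and 2 are correct, and the first option in Step 3 is the paper's proof. The paper supplies the splitting you left open. The $4$-fold cover $p(z_1,z_2)=(z_1+z_2,\overline{z_1-z_2})$ pulls $\omega_\pm$ back to $2\omega\oplus2\omega$, and the relevant components of $p^{-1}(\tau(K_0))$ and $p^{-1}(K_1)$ are $\gamma_{0,A}\times\gamma_{0,B}$ and $\gamma_{1,A}\times\gamma_{1,B}$. In your tangent-space terms, the $\omega_\pm$-orthogonal planes are $V_A=\{(w,\bar w)\}$ and $V_B=\{(w,-\bar w)\}$. With respect to these, $T\tau(K_0)=\R(1,1)\oplus\R(1-2i,-1-2i)$ and $TK_1=\R(1,1)\oplus\R(1,-1)$.

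The splitting cannot fail in general. If $V_A$ is any rational symplectic plane containing the common line $\ell$, every Lagrangian plane containing $\ell$ lies in $\ell^{\omega_\pm}=\ell\oplus V_A^{\omega_\pm}$ and therefore splits. Moreover, $V_A\cap\Z[i]^2$ and $V_A^{\omega_\pm}\cap\Z[i]^2$ span a finite-index sublattice, which gives the cover.

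One small correction: in the paper's cover the $B$-circles have directions $1$ and $1-2i$, so they meet transversally in two points of the same sign, not one. Only one of these lies in the path component of $q$, so the restricted group on that factor does have rank one. In any case only nonvanishing matters.

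Your fallback is a genuinely different argument, and it also works. Step 1 shows that $\tau(K_0)\cap K_1$ is a single circle, cut out by the one congruence $x_2-x_1\equiv c_4-c_1$ on $(\R/\Z)^2$, and that the intersection is clean along it. Step 2 gives weak exactness. Po\'zniak's clean-intersection theorem, or a Morse--Bott spectral sequence with a single column, then gives $HF(\tau(K_0),K_1;\Z_2)\cong H^*(S^1;\Z_2)$ directly in $T^4$, with no cover and no K\"unneth formula.

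Your energy argument needs one point made explicit: why the infinitely many lifts of the perturbed intersection points along the line $P_0\cap P_1\subset\C^2$ have actions within $O(\varepsilon)$ of each other. Fix a primitive of $\omega_\pm$ on $\C^2$ and functions $f_0,f_1$ on the two planes whose differentials are its restrictions. Then $f_1-f_0$ has zero derivative along that line. Also, the planes and the lifted perturbation are invariant under the deck translation by $(1,1)$, which lies in both tangent planes.

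The two routes trade off as follows. Yours makes the role of the area hypothesis most transparent, since it is literally the solvability of your two congruences, but it rests on the small-energy Morse--Bott analysis of strips. The paper's route needs only the K\"unneth formula and two one-dimensional computations, namely $HF(\gamma,\gamma)\cong H^*(S^1)$ and a transverse count with vanishing differential, at the price of finding the explicit cover.
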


\begin{proof}
    Let $p:(T^4, p^*\omega_{\pm})\to (T^4, \omega_{\pm})$ be the 4-fold symplectic covering map defined as $p(z_1, z_2)=(z_1+z_2, \overline{z_1-z_2})$. Note that $p^*\omega_{\pm} = p^*(\frac{i}{2}(dz_1\wedge d\bar{z_1}-dz_2\wedge d\bar{z_2}))=i(dz_1\wedge d\bar{z_1} + dz_2\wedge d\bar{z_2})=2\pi_1^*\omega +2\pi_2^*\omega$, so our covering map actually splits like so $p:((\C/\Z[i])\times (\C/\Z[i]), 2\omega\oplus 2\omega)\to ((\C/\Z[i])^2, \omega_{\pm})$. 
    
    Pick real representatives $c_1,c_2,c_3,c_4\in \R$ of the  $A(\sigma_1), A(\sigma_2), A(\sigma_3), A(\sigma_4)$ so that $c_1-c_2+c_3-c_4=0$ holds (we can do this since we are assuming \ref{eqn:area vanishing torus} holds), and let     
    \begin{align}
        d_{0,A} &= \frac{c_1-c_2}{2} \mod \Z, &
        d_{0,B} &= \frac{c_1+c_2}{2} \mod \Z, \nonumber\\
        d_{1,A} &= \frac{c_4-c_3}{2}\mod \Z, &
        d_{1,B} &= \frac{c_4+c_3}{2}\mod \Z
    \end{align}
    
    Now we can define $\gamma_{0,A},\gamma_{0,B},\gamma_{1,A},\gamma_{1,B}:S^1\to \C/\Z[i]$ by 
    \begin{align}
        \gamma_{0,A}(t) &= t + id_{0,A}, &
        \gamma_{0,B}(t) &= (1-2i)t + id_{0,B}, \nonumber\\
        \gamma_{1,A}(t) &= t + id_{1,A}, &
        \gamma_{1,B}(t) &= t + id_{1,B}
    \end{align}
    
    Moreover, define the tori $\tilde{K_1}=\gamma_{1,A}\times \gamma_{1,B}$ and $\widetilde{\tau(K_0)}=\gamma_{0,A}\times \gamma_{0,B}$, which are connected components of $p^{-1}(K_1)$ and $p^{-1}(\tau(K_0))$ respectively. In fact, one can check via computation that the restrictions 
    \[p|_{\tilde{K}_1}:\tilde{K_1}\to K_1\ \text{ and }\ p|_{\widetilde{\tau(K_0)}}:\widetilde{\tau(K_0)}\to \tau(K_0)\] are two-sheeted covering maps. Since a Hamiltonian isotopy in the base space induces a Hamiltonian isotopy in the covering space, it suffices to show that $\widetilde{\tau(K_0)}, \tilde{K_1}$ are non-displaceable, and Lemma \ref{lem: Hamiltonian isotopy covering} implies that $\tau(K_0), K_1$ are non-displaceable.

    To prove $\tilde{K}_1,\widetilde{\tau(K_0)}$ are non-displaceable, we will use Lagrangian Floer homology to show that $HF(\tilde{K_1},\widetilde{\tau(K_0)};q;\Z_2)\neq 0$. This is the Lagrangian Floer homology restricted to the connected component of the path space containing the constant path at the intersection point $q$ (see \cite{HLS2016} Section 2.1).  By showing that this component of the homology is nonzero, if a Hamiltonian diffeomorphism displaced the pair $(\tilde{K_1}, \widetilde{\tau(K_0)})$, the corresponding Floer complex would have no generators, and hence zero homology, contradicting the invariance of Floer homology under Hamiltonian isotopies. 
    
    First of all, in our case, Floer homology is unobstructed since $\pi_2(T^4)=0$ and $\pi_2(T^4,\tilde{K_1})=\pi_2(T^4,\widetilde{\tau(K_0}))=0$  \cite{FOOO} (the latter follows from the long exact sequence of relative homotopy groups and the observation that the maps $\pi_1(\tilde{K_1})\to \pi_1(T^4)$ and $\pi_1(\widetilde{\tau(K_0)})\to \pi_1(T^4)$ are injective). 
    
    Now we apply the Künneth-type formula for $HF$ derived in Section 2.6 of \cite{HLS2016} (see Remark \ref{remark: kunneth}.) For this we pick a point $a\in \gamma_{1,A}\cap \gamma_{0,A}$ and $b\in \gamma_{1,B}\cap \gamma_{0,B}$, and let $q=(a,b)$. We obtain
    \begin{align*}
        HF(\tilde{K_1}, \widetilde{\tau(K_0)};q; \Z_2)&=HF(\gamma_{1,A}\times \gamma_{1,B}, \gamma_{0,A}\times \gamma_{0,B};q; \Z_2)\\
        &\cong HF(\gamma_{1,A}, \gamma_{0,A};a; \Z_2)\otimes_{\Z_2} HF(\gamma_{1,B}, \gamma_{0,B};b;\Z_2)
    \end{align*}
    
    Since we are assuming equation \eqref{eqn:area vanishing torus} holds, we have $c_1-c_2=c_4-c_3$ and so $\gamma_{0,A}=\gamma_{1,A}$. Thus, $HF(\gamma_{1,A}, \gamma_{0,A};a;\Z_2)=HF(\gamma_{1,A}, \gamma_{1,A};a;\Z_2)\cong H^*(S^1;\Z_2)\cong \Z_2\oplus \Z_2$ (see Section 2.1.1 in \cite{HLS2016} or Proposition 1.13 in \cite{Denis Aroux}.) 
    
    Moreover, notice that the curves $\gamma_{0,B}$ and $\gamma_{1,B}$ intersect transversely exactly twice with the same local sign (see Figure \ref{figure3}). The differential $\partial$ of the Floer complex counts the pseudo-holomorphic strips of $\operatorname{ind}(u)=1$ joining intersection points. We claim that $\partial=0$ and hence $HF( \gamma_{1,B},\gamma_{0,B};b;\Z_2)\cong CF(\gamma_{1,B},\gamma_{0,B};b)\neq 0$. Since $\gamma_{0,B}, \gamma_{1,B}$ are oriented Lagrangians, we have a $\Z_2$-grading on the Floer complex (see \cite{Denis Aroux} Section 1.3). In particular, if $\gamma_{1,B}\cap \gamma_{0,B}=\{b,b'\}$ then the local intersection sign at $b$ determines the $\mathbb Z_2$-degree $\deg(b)$, and similarly for $\deg(b')$. Because $\gamma_{0,B}$ and $\gamma_{1,B}$ intersect with the same sign at $b$ and at $b'$, we have $\deg(b)=\deg(b')$ in the $\Z_2$-grading. Thus $\operatorname{ind}(u)\equiv\deg(b)-\deg(b')\equiv0 \pmod 2$ for any strip $u$, which means $\partial=0$.

    \begin{figure} \centering \includegraphics[width=0.5\textwidth]{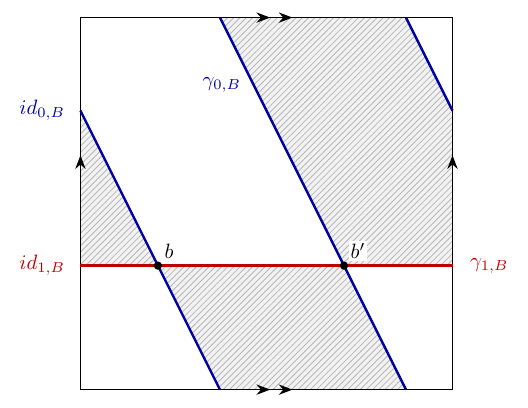}\caption{$\gamma_{0,B}$ and $\gamma_{1,B}$ intersect with the same sign at $b$ and $b'$.}\label{figure3}\end{figure}
    
    Therefore, we obtain the following $$\dim_{\Z_2} HF(\tilde{K_1}, \widetilde{\tau(K_0)};q;\Z_2)=\dim_{\Z_2}  HF(\gamma_{1,A}, \gamma_{0,A};a;\Z_2)\cdot \dim_{\Z_2} HF(\gamma_{1,B}, \gamma_{0,B};b;\Z_2)\geq 2$$     
    and conclude that $HF(\tilde{K_1}, \widetilde{\tau(K_0)};q;\Z_2)\neq 0$ which completes the proof.    
\end{proof}

This also concludes the proof of Theorem \ref{thm: area inequality torus}. 

\begin{rem}\label{remark: kunneth}
    To apply the Künneth-type formula in \cite{HLS2016}, we need to verify two things. One is that $(\C/\Z[i], 2\omega)$ is symplectically aspherical, i.e. that $(2\omega)|_{\pi_2(\C/\Z[i])}=0$. But this is true since $\pi_2(\C/\Z[i])=0$. The second is that the pair of Lagrangians $\gamma_{1,A}, \gamma_{0,A}\subset \C/\Z[i]$ must be \textit{weakly exact} with respect to any intersection point $p\in \gamma_{1,A}\cap \gamma_{0,A}$ (see Section 2.1 in \cite{HLS2016}), and the same needs to hold for $\gamma_{1,B},\gamma_{0,B}\subset \C/\Z[i]$. In particular, this means checking that $\int_{D}u^*(2\omega)=0$ for any map $u:(D, \partial D^+, \partial D^-, \{+1,-1\})\to (\C/\Z[i], \gamma_{1,\star}, \gamma_{0,\star}, \{p\})$ where $D$ is the unit disk. Let $\hat{u}:D\to \R^2$ be a lift of the map $u$ (we can lift it since $D$ is simply connected). Let $\hat p=\hat{u}(1)\in \R^2$ and define the 1-form $\lambda = (x-\hat p_x)\,dy - (y-\hat{p}_y)\, dx$. Note that $d\lambda = 2dx\wedge dy=q^*(2\omega)$ where $q:\C\to \C/\Z[i]$ is the quotient map. Moreover, note that $\lambda|_{l}=0$ for any line $l\subset \R^2$ passing through $\hat p$. In particular, $\lambda$ vanishes on the lines through $\hat p$ which are in $q^{-1}(\gamma_{0,\star})$ and $q^{-1}(\gamma_{1,\star})$. So, by Stokes' theorem, we obtain \begin{align*}
        \int_{D}u^*(2\omega) &= \int_D \hat u^*(q^*(2\omega))=\int_D \hat u ^*d\lambda = \int_{\partial D^+}\hat u^*\lambda + \int_{\partial D^-}\hat u^* \lambda =0.
    \end{align*}
    Moreover, as discussed in \cite{HLS2016}, having checked these conditions also allows us to consider Floer homology with $\Z_2$-coefficients.
\end{rem}

Now it remains to show that Theorem \ref{thm: area inequality torus} implies Theorem \ref{thm: area inequality}.

\begin{proof}[Proof of Theorem~\ref{thm: area inequality}]
    Let $\sigma_{i,n}$ be smooth embeddings $S^1\to Cyl=(\R/\Z)\times \R$ isotopic to the circle $\R/\Z$ for $i=1,2,3,4$ such that they converge in the $C^0$-topology to the maps $\sigma_i$ in Theorem \ref{thm: area inequality}. Furthermore, we choose the approximations $\sigma_{i,n}$ so that
    \begin{equation}\label{area vanishing}
        \int_{\sigma_{1,n}}y\ dx - \int_{\sigma_{2,n}}y\ dx + \int_{\sigma_{3,n}}y\ dx - \int_{\sigma_{4,n}}y\ dx =0
    \end{equation}
    for all $n$. We can do this by picking smooth embedded approximations $\hat\sigma_{i,n}$ of the $\sigma_i$ converging uniformly and in the $C^1$-topology on compact subsets away from the corner parameters of $\sigma_i$, with uniformly bounded first derivatives. Then we have
    \[
    \int_{\hat\sigma_{1,n}}y\,dx-\int_{\hat\sigma_{2,n}}y\,dx+\int_{\hat\sigma_{3,n}}y\,dx-\int_{\hat\sigma_{4,n}}y\,dx    =\varepsilon_n  \]
    where $\varepsilon_n\to 0$ as $n\to\infty$ by the dominated convergence theorem and the area-vanishing hypothesis on the $\sigma_i$. Now, for each $n$, define $\sigma_{1,n}(t)=\hat\sigma_{1,n}(t)-(0,\varepsilon_n)$ so we get $\int_{\sigma_{1,n}}y\ dx= \int_{\hat\sigma_{1,n}}y\ dx-\varepsilon_n$. Since $\varepsilon_n\to 0$ then $\sigma_{1,n}\to \sigma_1$. Set ${\sigma}_{i,n}=\hat\sigma_{i,n}$ for $i=2,3,4$ and obtain \eqref{area vanishing}.
    
    Now let $\lambda>0$ such that $im(\sigma_{i,n})\subset S^1\times [-\lambda, \lambda]$ for all $i,n$, and let $N\in \Z$ such that $N>16\lambda$. For each $i,n$, choose a lift $\overline\sigma_{i,n}:\R\to \C$ of the $\sigma_{i,n}$ so that $\overline\sigma_{i,n}(t+1)=\overline\sigma_{i,n}(t)+1$. Now define $\tilde{\sigma}_{i,n}(t) = \frac{1}{N}\overline\sigma_{i,n}(Nt)+\Z[i]$. Note that the $\tilde{\sigma}_{i,n}$ are well-defined as smooth embeddings $S^1\to \C/\Z[i]$. Moreover, we have $A(\tilde{\sigma}_{1,n})-A(\tilde{\sigma}_{2,n})
    +A(\tilde{\sigma}_{3,n})-A(\tilde{\sigma}_{4,n})
    =0$. To see this, let $\overline\sigma_{i,n}(t) = x(t)+i\ y(t)$. We obtain: \begin{align*}
    A(\tilde{\sigma}_{i,n})
    &\equiv
    \int_0^1\frac{1}{N} y(Nt)\,
        d\left(\frac{1}{N} x(Nt)\right)\\
    &=\frac{1}{N}\int_0^1 y(Nt)x'(Nt)\,dt\\
    &=\frac{1}{N^2}\int_0^N y(s)x'(s)\,ds\\
    &=\frac{1}{N}\int_{\sigma_{i,n}}y\,dx
    \pmod{\mathbb Z},
        \end{align*}
    where the last equality uses the $1$-periodicity of
    $y(s)x'(s)$ and the fact that $N$ is an integer.
    Consequently, equation \eqref{area vanishing} implies
    \[
    A(\tilde{\sigma}_{1,n})-A(\tilde{\sigma}_{2,n})
    +A(\tilde{\sigma}_{3,n})-A(\tilde{\sigma}_{4,n})
    =0 
    \]
    
    Therefore we can apply Theorem \ref{thm: area inequality torus} to our $\tilde{\sigma}_{i,n}$ to obtain the points $a_1, a_2, a_3, a_4\in S^1$ that form the square, with $\tilde{\sigma}_{4,n}(a_4)=\tilde{\sigma}_{1,n}(a_1)+i(\tilde{\sigma}_{2,n}(a_2)-\tilde{\sigma}_{1,n}(a_1))$ and $\tilde{\sigma}_{3,n}(a_3)=\tilde{\sigma}_{2,n}(a_2)+i(\tilde{\sigma}_{2,n}(a_2)-\tilde{\sigma}_{1,n}(a_1))$ in the torus $\C/\Z[i]$. Now pick representatives of these points in $\R^2\cong \C$ lying inside the strip $\R\times [-\frac{1}{16}, \frac{1}{16}]$ in such a way that they form a square with diagonal length less than $1/\sqrt{32}$. We can pick corresponding points in the $\text{im}(\sigma_{i,n})$ which will now form a square with diagonal length less than $N/\sqrt{32}$. The square lives within the region $(\R/\Z)\times [-\lambda, \lambda]\subset Cyl$, so applying compactness, we can find a sequence of squares jointly inscribed by the $\sigma_{i,n}$ which converges to a square jointly inscribed by the $\sigma_i$.
\end{proof}

\section{Lemmas}

In this section we prove the lemmas used in the main proofs.

\begin{figure} \centering \includegraphics[width=0.5\textwidth]{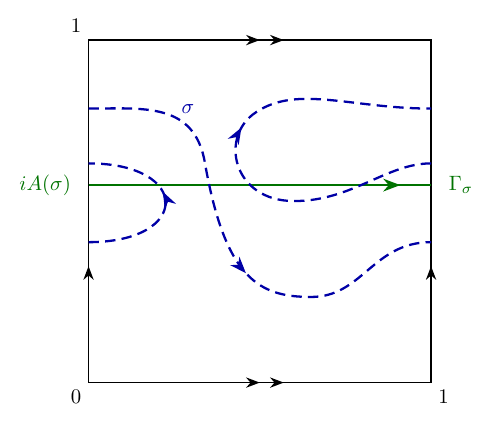}\caption{The curves $\sigma$ and $\Gamma$ from Lemma \ref{lem: Hamiltonian isotopy} are Hamiltonian isotopic.} \label{figure4}\end{figure}

\begin{lem}\label{lem: Hamiltonian isotopy}
    A smooth embedding $\sigma:S^1\to \C/\Z[i]$ isotopic to the circle $\R/\Z$ is Hamiltonian isotopic to the curve $\Gamma=\{\ x+iA(\sigma)\ |\ x\in \R/\Z\ \}\subset \C/\Z[i]$.
\end{lem}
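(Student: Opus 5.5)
We prove the lemma in two stages. First we isotope $\sigma$ to some straight circle $\{x+ic\}$ through embedded curves. Then we show that the flux of this isotopy can be corrected by a vertical translation, and that the correct height is forced to be $c\equiv A(\sigma)$.

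The first stage uses the classification of essential simple closed curves on $T^2$. Since $\sigma$ is an embedding isotopic to $\R/\Z$, there is a smooth isotopy $\sigma_s$, $s\in[0,1]$, of embeddings with $\sigma_0=\sigma$ and $\sigma_1=\R/\Z$. We may take it smooth by standard smoothing of isotopies on surfaces, or via the Baer–Epstein theorem. By the isotopy extension theorem, this is realized by an ambient isotopy $\psi_s$ of $\C/\Z[i]$.

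For curves on a surface, a smooth isotopy of embedded circles is generated by an exact (Hamiltonian) flow precisely when its flux vanishes. Concretely, the normal component of $\partial_s\sigma_s$ defines a closed $1$-form $\iota_{\partial_s\sigma_s}\omega$ along the curve. Its integral over the circle is the rate of change of the swept area, and it can be extended to a time-dependent Hamiltonian exactly when that integral vanishes. So we compute the total flux
\[
F=\int_0^1\int_{S^1}\sigma_s^*\big(\iota_{\partial_s\sigma_s}\omega\big)\,ds ,
\]
the signed area swept out by the isotopy. Working with lifts $\hat\sigma_s$ satisfying $\hat\sigma_s(t+1)=\hat\sigma_s(t)+1$, Stokes' theorem applied to $\omega=d(-y\,dx)$ on $\C$ gives
\[
F=\pm\Big(\int_0^1 y_1\,dx_1-\int_0^1 y_0\,dx_0\Big)=\pm\big(0-\tilde A(\sigma)\big),
\]
where $\tilde A$ is the real-valued area of the lift. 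This identity holds modulo $\Z$ once we account for the choice of lift, since changing the lift changes the integral by an integer.

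We then concatenate the isotopy with the vertical translation $\{x+i u\}$, $u\in[0,c]$. This translation is itself a flux-carrying isotopy sweeping signed area $c$; it is not Hamiltonian, since it is generated by the closed, non-exact form $dx$. Choosing $c=\tilde A(\sigma)$ makes the total flux of the combined isotopy from $\sigma$ to $\Gamma$ vanish. A flux-zero isotopy of a Lagrangian circle is then realized by a Hamiltonian isotopy. Concretely, we write the normal deformation $1$-form at each time as $dH_s$ on the curve, which is possible because it is closed with zero period, and extend $H_s$ to a compactly supported function on a tubular neighborhood of the curve.

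The main obstacle is this last step: extending to an ambient Hamiltonian and keeping the sign and normalization conventions of $\omega$ and $A$ consistent, so that the height comes out as $A(\sigma)$ rather than $-A(\sigma)$. I would handle it by the standard Weinstein neighbourhood argument, in which exact deformations of a Lagrangian are Hamiltonian. I would check the sign on the model case $\sigma=\{x+ic\}$, for which $A(\sigma)=c$ and the identity map must be the required isotopy.
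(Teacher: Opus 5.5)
There is a genuine gap at the step where you pass from vanishing \emph{total} flux to a Hamiltonian. To write the normal deformation form $\sigma_s^*(\iota_{\partial_s\sigma_s}\omega)$ as $d(H_s\circ\sigma_s)$ at time $s$, its period over $S^1$ must vanish \emph{at that time}. That is, the instantaneous flux $\int_0^1\omega(\partial_s\hat\sigma_s,\partial_t\hat\sigma_s)\,dt$ must be zero for every $s$. Your quantity $F$ is only the integral of this over $s\in[0,1]$.

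In the concatenated path you build, the instantaneous flux is generally nonzero during the first stage, since it equals minus the derivative of the lifted area of $\hat\sigma_s$. It is certainly nonzero throughout the translation stage: as you note yourself, the vertical translation is generated by the non-exact form $dx$. So at those times there is no $H_s$ to extend, and no Weinstein-neighbourhood argument will produce one. The real obstacle is not the extension or the sign conventions but that the form you want to integrate is not exact. The fact that a zero-total-flux isotopy of this circle can be replaced by a Hamiltonian one is true here, but proving it requires modifying the path.

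The missing idea, which is what the paper uses, is to apply the vertical translation \emph{simultaneously} with the isotopy rather than afterwards. Set $\gamma_s=\sigma_s+i\,c(s)$ with $c(s)=A(\sigma)-A(\sigma_s)$. Since $\int_0^1 dx=1$ along a lift, shifting vertically by $c$ changes $\int y\,dx$ by exactly $c$, so $A(\gamma_s)\equiv A(\sigma)$ is constant in $s$. With $\hat\gamma_s=X+iY$ and $I(s)=\int_0^1 Y\,\partial_tX\,dt$, integrating by parts gives $I'(s)=-\int_{S^1}\omega(\partial_s\gamma_s,\partial_t\gamma_s)\,dt$, and $I'(s)=0$ because $I$ is continuous and constant mod $\Z$. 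Hence each such form is exact, equal to $d\lambda_s$ for every $s$. Extending $\lambda_s$ by a bump function in a tubular neighbourhood of $\gamma_s$ then gives $H_s$. A reparametrization of $\gamma_s$ absorbs the tangential discrepancy between $X_{H_s}$ and $\partial_s\gamma_s$, which your proposal also does not address.

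Your Stokes computation of $F$ is correct and is essentially the time-integrated version of this identity. Your target height $c\equiv A(\sigma)$ is also right. What is missing is the cancellation of flux at each instant, not just in total.
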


\begin{proof}
    Let $\sigma_s:S^1\to\C/\Z[i]$ be an isotopy of curves so that $\sigma_0=\sigma$ and $\sigma_1(S^1)=\R/\Z$. Let $\gamma_s(t)=\sigma_s(t)+i\cdot c(s)$ be another family of circles on $\C/\Z[i]$, where $c(s)=A(\sigma)-A(\sigma_s)$ is a vertical shift so that $A(\gamma_s)=A(\sigma)$ for all $s\in [0,1]$, so we get an isotopy from $\gamma_0(S^1)=\sigma(S^1)$ to $\gamma_1(S^1)=\Gamma$. Flowing along the vector field $\partial_s\gamma_s(t)$ gives this isotopy. We want this vector field to be the evaluation of a Hamiltonian vector field coming from a map $H_s:\C/\Z[i]\to \R$, i.e., we want $X_{H_s}(\gamma_s(t))=\partial_s\gamma_s(t)$ where $X_{H_s}$ satisfies $\iota_{X_{H_s}}\omega = dH_s$ and $\omega$ is the symplectic area form on $\C/\Z[i]$. 
    
    Consider the 1-form $\alpha_s=\omega(\partial_s\gamma_s, \partial_t\gamma_s)\, dt$ on $S^1$. We will show this form is exact by showing $\int_{S^1}\alpha_s=0$. If we let $\hat{\gamma}_s:\R\to \C$ be a smooth family of lifts of the maps $t\mapsto\gamma_s(t+\Z)$, and write $\hat{\gamma}_s(t) = X(s,t)+i\,Y(s,t)$, then \[\alpha_s = (dx\wedge dy)(\partial_s\gamma_s, \partial_t\gamma_s)\, dt = (\partial_sX\partial_tY - \partial_tX\partial_sY)dt\] Now define $I(s)=\int_0^1 Y\partial_tX\ dt$ and note $A(\gamma_s)\equiv I(s)\mod \Z$. Since $A(\gamma_s)$ is constant and $I(s)$ is continuous, integrating by parts yields $$0=I'(s)=\int_0^1(\partial_tX\partial_sY + Y\partial_{ts}X)\, dt = [Y\partial_s X]_{t=0}^{t=1} - \int_{S^1}\alpha_s = -\int_{S^1}\alpha_s$$ since $Y(s,0)=Y(s,1)$ and $X(s,1)=X(s,0)+1$ so $\partial_sX(s,1)=\partial_sX(s,0)$. 

    Now we know $\alpha_s$ is exact, so write $\alpha_s=d\lambda_s$ where $\lambda_s:S^1\to \R$ is a smooth function with smooth dependence on $s$ and such that $\lambda_s(0)=0$. We want to find a Hamiltonian $H_s:\C/\Z[i]\to\R$ so that, in particular, \[\alpha_s=\omega(\partial_s\gamma_s, \partial_t\gamma_s)dt = dH_s(\partial_t\gamma_s)dt = d(H_s\circ \gamma_s),\] so we want $(H_s\circ \gamma_s)(t) = \lambda_s(t)$. Let $\Phi_s(t,r): S^1\times (-\varepsilon, \varepsilon)\to \C/\Z[i]$ be a tubular neighborhood of $\gamma_s$ smoothly depending on $(s,t,r)$ with uniform tubular radius $\varepsilon>0$ and so that $\Phi_s(t,0)=\gamma_s(t)$. Let $\phi:\R \to \R$ be a smooth bump function with $\phi(0)=1$ and $\phi(r)=0$ for $|r|\geq \varepsilon$. Now we can define $H_s=0$ on $(\C/\Z[i])\setminus \Phi_s(S^1\times (-\varepsilon, \varepsilon))$, and $H_s(\Phi_s(t,r))=\phi(r)\cdot \lambda_s(t)$ so that we get $H_s(\Phi_s(t,0))=H_s(\gamma_s(t))=\lambda_s(t)$. This construction yields a smooth Hamiltonian $H_s:\C/\Z[i]\to \R$.

    Now we can compute the Hamiltonian vector field $X_{H_s}$ along $\gamma_s$ \begin{align*}
        &\omega(X_{H_s}(\gamma_s(t)), \partial_t\gamma_s)dt=d(H_s\circ \gamma_s)=\alpha_s=\omega(\partial_s\gamma_s, \partial_t\gamma_s)dt\\
        \implies &\omega(X_{H_s}(\gamma_s(t))-\partial_s\gamma_s, \partial_t\gamma_s)=0
    \end{align*} 
    This means that \[X_{H_s}(\gamma_s(t))=\partial_s\gamma_s(t) +a(s,t)\partial_t\gamma_s(t)\] for some smooth function $a:[0,1]\times S^1\to \R$. To fix this tangential error term, we can reparametrize $\gamma_s$ to absorb the tangential component $a(s,t)\partial_t\gamma_s(t)$. Let $\tilde{\gamma}_s(t)=\gamma_s(\psi_s(t))$ where $\psi_s$ is the flow on $S^1$ of the time-dependent vector field $a(s,t)\partial_t$, with $\psi_0=\mathrm{id}$. The chain rule gives  \begin{equation}\label{ode}
    \partial_s\bigl(\tilde{\gamma}_s(t)\bigr) =X_{H_s}\bigl (\tilde\gamma_s(t)\bigr)\ \text{ with }\ \tilde\gamma_0(t)=\gamma_0(t).
    \end{equation}
    
    On the other hand, let $\varphi_s:\C/\Z[i]\to \C/\Z[i]$ be the flow of our Hamiltonian vector field $X_{H_s}$. Consider the path $s\mapsto \varphi_s(\gamma_0(t))$, and its flow equation at $\gamma_0(t)$ which is  
    \[\partial_s\varphi_s(\gamma_0(t))=X_{H_s}(\varphi_s(\gamma_0(t)))\ \text{ with }\ \varphi_0(\gamma_0(t))=\gamma_0(t)\] 
    
    This is the exact same ordinary differential equation and initial condition as \eqref{ode}. By Picard-Lindelöf, we get $\varphi_s(\gamma_0(t))=\tilde{\gamma}_s(t)$. This completes the proof that $\sigma$ is Hamiltonian isotopic to $\Gamma$ since $\varphi_0(\gamma_0(S^1))=\gamma_0(S^1)=\sigma(S^1)$ and $\varphi_1(\gamma_0(S^1))=\varphi_1(\sigma(S^1))=\tilde{\gamma}_1(S^1)=\gamma_1(S^1)=\Gamma$.  
 \end{proof}

 \begin{lem}\label{lem: Hamiltonian isotopy covering}
    Let $p:(\widetilde{X}, p^*\omega)\to (X,\omega)$ be a finite symplectic covering map, where $(X,\omega)$ is a symplectic manifold. Let $\widetilde{L}_j\subset \widetilde{X}$ be submanifolds in the cover so that $p(\widetilde{L}_j)=L_j$ for $j=0,1$ where the $L_j\subset X$ are Lagrangian submanifolds. If $L_0, L_1$ are displaceable, then $\widetilde{L}_0, \widetilde{L}_1$ are displaceable.
\end{lem}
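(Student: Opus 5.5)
Fix a Hamiltonian diffeomorphism $\phi$ of $(X,\omega)$ with $\phi(L_1)\cap L_0=\varnothing$ (the convention used in the paper). The plan is to lift $\phi$ to a Hamiltonian diffeomorphism $\tilde\phi$ of $(\widetilde X,p^*\omega)$ covering it, that is, with $p\circ\tilde\phi=\phi\circ p$. Once that is done, displacement passes to the cover. If $x\in\tilde\phi(\widetilde L_1)\cap\widetilde L_0$, then
\[
p(x)\in p(\tilde\phi(\widetilde L_1))\cap p(\widetilde L_0)=\phi(p(\widetilde L_1))\cap L_0=\phi(L_1)\cap L_0=\varnothing,
\]
which is impossible. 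Hence $\tilde\phi(\widetilde L_1)\cap\widetilde L_0=\varnothing$, so $\widetilde L_0,\widetilde L_1$ are displaceable. This uses only $p(\widetilde L_j)=L_j$, so the Lagrangian hypothesis is not actually needed here.

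To construct the lift, write $\phi=\varphi_1$, where $\varphi_s$ is the flow of a time-dependent Hamiltonian $H_s\colon X\to\R$. Take compact support if $X$ is noncompact; in our application $X=T^4$ is compact, so this is automatic. Define $\widetilde H_s=H_s\circ p$.

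\begin{itemize}
\item Since $p$ is a local symplectomorphism, the Hamiltonian vector fields are $p$-related. Indeed,
\[
\iota_{dp(X_{\widetilde H_s})}\omega=d H_s
\]
locally, so $dp(X_{\widetilde H_s})=X_{H_s}\circ p$.
\item $\widetilde H_s$ is compactly supported because $p$ is a finite (hence proper) covering. So its flow $\tilde\varphi_s$ exists for all $s\in[0,1]$.
\item $p\circ\tilde\varphi_s$ and $\varphi_s\circ p$ solve the same ODE with the same initial condition. By uniqueness, $p\circ\tilde\varphi_s=\varphi_s\circ p$.
\end{itemize}

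Then $\tilde\phi:=\tilde\varphi_1$ is a Hamiltonian diffeomorphism of $\widetilde X$ covering $\phi$, as required.

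The only point needing care is that the lift really is Hamiltonian, and not merely symplectic. A general symplectic isotopy need not lift to a Hamiltonian one. Here, though, the Hamiltonian $H_s\circ p$ is globally defined on the cover, so no flux obstruction arises. The remaining steps are direct verifications, and I expect no real obstacle.
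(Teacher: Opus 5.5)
Your proposal is correct and follows essentially the same route as the paper: use the Hamiltonian $H_s\circ p$ on the cover, note that its Hamiltonian vector field is $p$-related to $X_{H_s}$, deduce $p\circ\tilde\varphi_s=\varphi_s\circ p$ from ODE uniqueness, and project a putative intersection point to $L_0\cap\phi(L_1)=\varnothing$. The only difference is the order of the steps. The paper first obtains $\tilde\varphi_t$ from the homotopy lifting property and then checks that it is the flow of $X_{H_t\circ p}$. You instead define the flow of $H_s\circ p$ directly, using properness of the finite cover for completeness, and then check that it covers $\varphi_s$.
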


\begin{proof}
   Suppose there is a Hamiltonian diffeomorphism $\varphi:X\to X$ such that $L_0\cap \varphi(L_1)=\varnothing$. We know $\varphi=\varphi_1$ is the time-1 flow of a Hamiltonian isotopy $\varphi_t$. By the homotopy lifting property, we can find $\tilde{\varphi_t}:\widetilde{X}\to \widetilde{X}$, the unique lift of $\varphi_t\circ p$ satisfying $\tilde{\varphi}_0=\text{id}_{\tilde{X}}$ and $\tilde{\varphi}=\tilde{\varphi}_1$. In fact, $\tilde{\varphi}$ is a Hamiltonian diffeomorphism: let $H_t:X\to \R$ be the time-dependent Hamiltonian that defines the flow $\varphi_t$ of the Hamiltonian vector field $X_{H_t}$ satisfying $\iota_{X_{H_t}}\omega = dH_t$. We can define the unique vector field $\widetilde{X}_{H_t}:\tilde{X}\to T\tilde{X}$ which is a lift of $X_{H_t}$ by $\widetilde{X}_{H_t}(\tilde{x})=(dp_{\tilde{x}})^{-1}(X_{H_t}(p(\tilde{x})))$. 
   
   Now we can differentiate both sides of the equation $p(\widetilde{\varphi}_t (\tilde{x})) = \varphi_t(p(\tilde{x}))$ to obtain
   
   \[dp_{\widetilde{\varphi}_t(\tilde{x})}\left(\frac{d}{dt}\widetilde{\varphi}_t(\tilde{x})\right) = \frac{d}{dt}\varphi_t(p(\tilde{x})) = X_{H_t}(\varphi_t(p(\tilde{x}))) = X_{H_t}(p(\widetilde{\varphi}_t(\tilde{x}))).\]

   Since $p$ is a local diffeomorphism we can apply the isomorphism $(dp_{\widetilde{\varphi}_t(\tilde{x})})^{-1}$ on both sides to obtain \[\frac{d}{dt}\widetilde{\varphi}_t(\tilde{x}) = \widetilde{X}_{H_t}(\widetilde{\varphi}_t(\tilde{x})) =  X_{H_t \circ p}(\widetilde{\varphi}_t(\tilde{x}))\]   where the last equality follows since    \[\iota_{\widetilde{X}_{H_t}}p^*\omega = p^*(\iota_{X_{H_t}}\omega)=p^*(dH_t)=d(H_t\circ p)\]

    So $\widetilde{\varphi}_t$ is the flow of the Hamiltonian vector field $X_{H_t\circ p}$, and thus $\widetilde{\varphi}$ is a Hamiltonian diffeomorphism.
    
    Now note that  $p(\tilde{L}_0\cap \tilde{\varphi}(\tilde{L}_1))\subseteq L_0\cap p(\tilde{\varphi}(\tilde{L}_1))=L_0\cap \varphi(p(\tilde{L}_1))=L_0\cap \varphi(L_1)= \varnothing$, so $\tilde{L}_0$ and $\tilde{L}_1$ are displaceable, as wanted.
\end{proof}

% In fact, $\tilde{\varphi}$ is a Hamiltonian diffeomorphism: let $H_t:X\to \R$ be the time-dependent Hamiltonian that defines the flow $\varphi_t$ of the Hamiltonian vector field $X_{H_t}$ satisfying $\iota_{X_{H_t}}\omega = dH_t$. We can define the unique vector field $\widetilde{X}_{H_t}:\tilde{X}\to T\tilde{X}$ which is a lift of $X_{H_t}$ by $\widetilde{X}_{H_t}(\tilde{x})=(dp_{\tilde{x}})^{-1}(X_{H_t}(p(\tilde{x})))$. Now note that 

   % $$\iota_{\widetilde{X}_{H_t}}p^*\omega = p^*(\iota_{X_{H_t}}\omega)=p^*(dH_t)=d(H_t\circ p)$$
    
  %  Therefore $\widetilde{X}_{H_t}=X_{H_t\circ p}$ is precisely the Hamiltonian vector field for $H_t\circ p$. So if the flow of $X_{H_t\circ p}$ is $\psi$, we must have that $p\circ \psi = {\varphi} \circ p$, so $\psi$ is a lift of $\varphi\circ p$. By uniqueness of lifts, $\psi=\tilde{\varphi}$.

%\section{Consequences and Future Work}

%The proof of Theorem \ref{thm: area inequality} resolves Conjecture 5.6 in \cite{Tao integration approach}, and therefore resolves all the other conjectures in that paper (except for the original square peg conjecture). These include Conjecture 4.1 (proved in \cite{Hugelmeyer}),  Conjecture 5.2, which Tao calls the \textit{quadripartite} square peg problem, and the equivalent Conjectures 6.1, 6.2 and 6.8. 

%In particular, the quadripartite square peg problem asks if the quadruplet $(\sigma_1(\R/\Z),\sigma_1(\R/\Z),\sigma_2(\R/\Z),\sigma_2(\R/\Z))$ jointly inscribes a square where $\sigma_1,\sigma_2:\R/\Z\to Cyl$ are simple closed curves homologous to $\R/\Z$ (note that they are not required to be polygonal).

\end{document}